\newtheorem{theorem}{Theorem}
\newtheorem{lemma}[theorem]{Lemma}
\theoremstyle{definition}
\newtheorem{definition}[theorem]{Definition}
\theoremstyle{remark}
\numberwithin{equation}{section}
\newcommand{\eps}   {\varepsilon}
\newcommand{\tensor}{\otimes}
\newcommand{\CC}{\mathbb{C}}
\newcommand{\F}{\mathcal{F}}
\DeclareMathOperator{\ad}{ad}
\DeclareMathOperator{\At}{At}
\DeclareMathOperator{\End}{End}
\DeclareMathOperator{\GL}{GL}
\DeclareMathOperator{\rank}{rank}
\DeclareMathOperator{\rel}{rel}
\DeclareMathOperator{\tr}{tr}
\DeclareMathOperator{\vol}{vol}
\renewcommand{\leq}{\leqslant}
\renewcommand{\geq}{\geqslant}
\begin{document}

\baselineskip=15pt

\title[Approximate Hermitian--Einstein structures]{Existence of 
approximate Hermitian--Einstein structures on semistable 
principal bundles}

\author[I.~Biswas]{Indranil Biswas}

\address{School of Mathematics, Tata Institute of Fundamental
Research, Homi Bhabha Road, Bombay 400005, India}

\email{indranil@math.tifr.res.in}

\author[A.~Jacob]{Adam Jacob}

\address{Department of Mathematics, Columbia University, New 
York, NY 10027, USA}

\email{ajacob@math.columbia.edu}

\author[M.~Stemmler]{Matthias Stemmler}

\address{School of Mathematics, Tata Institute of Fundamental
Research, Homi Bhabha Road, Bombay 400005, India}

\email{stemmler@math.tifr.res.in}

\subjclass[2000]{53C07, 32L05}

\keywords{Approximate Hermitian-Einstein structure, semistable 
$G$-bundle, K\"ahler metric}

\date{}

\begin{abstract}
Let $E_G$ be a principal $G$--bundle over a compact 
connected K\"ahler 
manifold, where $G$ is a connected reductive linear algebraic 
group defined over $\CC$. We show that $E_G$ is semistable if 
and only if it admits approximate Hermitian--Einstein structures.
\end{abstract}

\maketitle

\section{Introduction}

A holomorphic vector bundle $E$ over a compact connected K\"ahler 
manifold $(X \, , \omega)$ is said to admit approximate 
Hermitian--Einstein metrics if for every $\eps \,>\, 0$, 
there is a Hermitian metric $h$ on $E$ such that
\[
  \sup_X \big|\sqrt{-1} \Lambda_\omega F(h) - \lambda \cdot
{\rm Id}_E\big|_h < \, \eps \, .
\]
In \cite[Theorem 2]{Ja11} it was shown that a holomorphic vector 
bundle $E$ over a compact K\"ahler manifold $(X \, , \omega)$ is 
semistable if and only if it admits approximate 
Hermitian--Einstein metrics. This generalizes a result of 
Kobayashi \cite[p.\ 234, Theorem 10.13]{Ko87} for complex 
projective manifolds. It is an analogue, for semistable bundles, 
of the classical Hitchin--Kobayashi correspondence, which is 
given by the famous theorem of Donaldson, Uhlenbeck and Yau. 
This theorem relates polystable bundles to (exact) solutions of 
the Hermitian--Einstein equation, and was first proven for curves 
by Narasimhan and Seshadri \cite{NS65}, then for algebraic 
surfaces by Donaldson \cite{Do85}, and finally in arbitrary 
dimension by Uhlenbeck and Yau \cite{UY86}.

Our aim here is to generalize the above result of \cite{Ja11} to 
principal $G$--bundles over $X$, where $G$ is a connected 
reductive linear algebraic group defined over $\CC$. Fix a 
maximal compact subgroup $K\, \subset\, G$.
A holomorphic principal $G$--bundle $E_G$ over $X$
is said to admit approximate Hermitian--Einstein structures if 
for every $\eps \,>\, 0$, there exists a $C^\infty$ reduction 
of structure group $E_K \,\subset\, E_G$ to $K$ and an
element $\lambda$ in the center of $\text{Lie}(G)$ such that
\[
\sup_X \big|\Lambda_\omega F(\nabla^{E_K}) - 
\lambda\big|_{h_{E_K}} < \, \eps \, ,
\]
where $F(\nabla^{E_K})$ is the curvature form of the Chern 
connection of $E_K$, and $h_{E_K}$ is the Hermitian metric on 
$\ad(E_G)$ induced by $E_K$ (this Hermitian metric $h_{E_K}$
is described in \eqref{induced}).

We prove the following:

\begin{theorem}\label{main}
A holomorphic principal $G$--bundle $E_G$ over $X$ is semistable 
if and only if it admits approximate Hermitian--Einstein 
structures.
\end{theorem}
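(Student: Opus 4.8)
The plan is to reduce Theorem \ref{main} to the corresponding statement for vector bundles, namely Theorem 2 of \cite{Ja11}, by passing to the adjoint vector bundle $\ad(E_G) \,=\, E_G \times_G \text{Lie}(G)$. The bridge between the two settings is the observation that a $C^\infty$ reduction of structure group $E_K \subset E_G$ to $K$ induces the Hermitian metric $h_{E_K}$ on $\ad(E_G)$, and that the curvature of the corresponding Chern connection on $\ad(E_G)$ is $\ad(F(\nabla^{E_K}))$, so that the contracted curvature of the adjoint bundle equals $\ad(\Lambda_\omega F(\nabla^{E_K}))$. Two facts will be used throughout. First, since $\text{Lie}(G)$ is reductive the adjoint action is trace-free, whence $\deg \ad(E_G) = 0$ and the Einstein constant of $\ad(E_G)$ vanishes. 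Second, in characteristic zero $E_G$ is semistable if and only if the vector bundle $\ad(E_G)$ is semistable: a destabilizing reduction of $E_G$ to a parabolic subgroup yields a destabilizing subsheaf of $\ad(E_G)$, and conversely the maximal destabilizing subsheaf of an unstable $\ad(E_G)$ is a sheaf of parabolic subalgebras and hence defines a parabolic reduction of $E_G$.

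For the implication ``approximate Hermitian--Einstein $\Rightarrow$ semistable'' I would argue as follows. Decompose $\text{Lie}(G) = \mathfrak{z} \oplus \mathfrak{s}$ into its center $\mathfrak{z}$ and semisimple part $\mathfrak{s}$; this is $\ad$-invariant, $\ad$ annihilates $\mathfrak{z}$, and $\ad$ is injective on $\mathfrak{s}$. If $E_K$ is an $\eps$-approximate Hermitian--Einstein reduction with central element $\lambda$, then applying $\ad$ and using $\ad(\lambda) = 0$ shows that the Hermitian--Einstein tensor of $\ad(E_G)$, which equals $\ad(\Lambda_\omega F(\nabla^{E_K}) - \lambda)$, has sup-norm at most $C\eps$ with $C = \|\ad\|$. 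Hence $\ad(E_G)$ admits approximate Hermitian--Einstein metrics, so by Theorem 2 of \cite{Ja11} it is semistable, and therefore $E_G$ is semistable.

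The substantial direction is ``semistable $\Rightarrow$ approximate Hermitian--Einstein'', and this is where I expect the main obstacle. Theorem 2 of \cite{Ja11} produces approximate Hermitian--Einstein metrics on $\ad(E_G)$, but an arbitrary Hermitian metric on $\ad(E_G)$ need not arise from a reduction of structure group of $E_G$, so it cannot be used directly. To remain within reductions I would instead run the Donaldson heat flow on the space of $C^\infty$ reductions $E_K$ of $E_G$, governed by $h^{-1}\,\partial_t h = -(\Lambda_\omega F(\nabla^{E_K}) - \lambda)$, where $\lambda$ is the topologically determined harmonic central constant coming from the abelianization of $E_G$. This flow is natural, so it preserves the reduction structure, and its image under $\ad$ is exactly the Donaldson heat flow on $\ad(E_G)$; long-time existence follows as in the vector bundle case. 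Since $\ad(E_G)$ is semistable, the flow-based estimates of \cite{Ja11} give $\inf_t \sup_X |\ad(\Lambda_\omega F(\nabla^{E_K}) - \lambda)| = 0$, and because $\ad$ is injective on $\mathfrak{s}$ while the central part is made exactly Einstein by the choice of $\lambda$, this upgrades to $\inf_t \sup_X |\Lambda_\omega F(\nabla^{E_K}) - \lambda|_{h_{E_K}} = 0$, yielding the required approximate Hermitian--Einstein structures.

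The delicate point to make rigorous is the transfer of Jacob's estimates: one must verify that the only obstruction to the adjoint heat flow reaching the infimum $0$ is the presence of destabilizing subsheaves of $\ad(E_G)$ that are sheaves of parabolic subalgebras---equivalently, destabilizing parabolic reductions of $E_G$---so that semistability of $E_G$ forces the infimum to vanish. I would handle the reductive rather than merely semisimple case by treating the central torus factor separately, where Hermitian--Einstein metrics exist exactly, and combining it with the semisimple part through the almost-direct-product decomposition $G = Z(G)^\circ \cdot [G,G]$.
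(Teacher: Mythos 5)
Your proposal follows essentially the same route as the paper: the forward direction via the adjoint bundle, Theorem~\ref{jacob} and the equivalence of semistability of $E_G$ with that of $\ad(E_G)$, and the converse by running the Donaldson heat flow on reductions and observing that under $\ad$ it becomes Jacob's flow on the semistable bundle $\ad(E_G)$ --- the paper's Lemma~\ref{lem1} makes precise your appeal to ``naturality'' by noting that the flow velocity $\Lambda_\omega F(\nabla^{E_K})$ is a section of $\ad(E_K)$, so the flow on metrics of $\ad(E_G)$ preserves the subspace of induced metrics. The paper likewise handles the central factor separately, reducing at the outset to semisimple $G$ via the isogeny $G \longrightarrow (G/Z_0(G)) \times (G/[G,G])$, exactly as you propose.
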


\section{Preliminaries}

Let $(X \, , \omega)$ be a compact connected K\"ahler manifold of complex dimension $n$, and let $E$ be a holomorphic vector bundle over $X$.

Recall that the {\em degree\/} of a torsion--free coherent 
analytic sheaf $\F$ on $X$ is defined to be
\[
  \deg(\F) \, := \, \int_X c_1(\F) \wedge \omega^{n-1} \, ;
\]
if $\rank(\F) > 0$, the {\em slope\/} of $\F$ is
\[
  \mu(\F) \, := \, \frac{\deg(\F)}{\rank(\F)} \, .
\]

\begin{definition}
A holomorphic vector bundle $E$ is called {\em semistable\/} if 
$\mu(\F) \, \leq \, \mu(E)$ for every nonzero coherent 
analytic subsheaf $\F$ of $E$.
\end{definition}

\begin{definition}
A holomorphic vector bundle $E$ is said to admit {\em approximate 
Hermitian--Einstein metrics\/} if for every $\eps\,>\, 0$, there 
exists a Hermitian metric $h$ on $E$ such that
\[
\sup_X \big|\sqrt{-1} \Lambda_\omega F(h) - \lambda \cdot 
{\rm Id}_E\big|_h 
< \, \eps \, .
\]
Here $\Lambda_\omega$ is the adjoint of the wedge 
product with $\omega$, $F(h)$ is the curvature form of the Chern 
connection for $h$, and $\lambda$ is given by
$$
  \lambda \, = \, \frac{2 \pi \cdot \mu(E)}{(n-1)! \cdot \vol(X)} \, ,
$$
where $\vol(X)$ denotes the volume of $X$ with respect to 
the K\"ahler form $\omega$.
\end{definition}

In \cite{Ja11}, the following was proved:
\begin{theorem}[{\cite[Theorem 2]{Ja11}}] \label{jacob}
A holomorphic vector bundle $E$ over $X$ is semistable if and 
only if it admits approximate Hermitian--Einstein metrics.
\end{theorem}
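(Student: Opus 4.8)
The plan is to prove the two implications separately; I expect the forward one (semistable $\Rightarrow$ approximate Hermitian--Einstein) to be the substantial direction. The reverse implication rests on a Chern--Weil computation going back to Kobayashi \cite{Ko87}. Assume $E$ admits approximate Hermitian--Einstein metrics and let $\F \subset E$ be a nonzero coherent subsheaf with $0 < \rank \F < \rank E$. Off an analytic subset of codimension at least two, the saturation of $\F$ (whose degree is at least that of $\F$) is a subbundle; let $\pi_h$ be the $h$--orthogonal projection onto it. The Chern--Weil expression for $\deg \F$, after discarding the nonnegative second--fundamental--form term, gives
\[
\deg \F \,\leq\, \frac{\sqrt{-1}}{2\pi}\int_X \tr\big(\pi_h\,F(h)\big)\wedge \omega^{n-1}\,.
\]
Writing $\sqrt{-1}\Lambda_\omega F(h) = \lambda\,\mathrm{Id}_E + \big(\sqrt{-1}\Lambda_\omega F(h) - \lambda\,\mathrm{Id}_E\big)$ and using $\tr\pi_h = \rank\F$ almost everywhere, the right--hand side equals $\mu(E)\rank\F$ plus a term bounded by $C\eps$, for a constant $C$ independent of $h$; here the stated normalization of $\lambda$ is what makes the main term exactly $\mu(E)\rank\F$. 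Letting $\eps \to 0$ yields $\mu(\F) \leq \mu(E)$, so $E$ is semistable.

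For the forward implication I would adapt Kobayashi's projective argument \cite{Ko87} to the K\"ahler setting by means of the Donaldson--Uhlenbeck--Yau theorem \cite{UY86}. A semistable bundle $E$ possesses a Jordan--H\"older filtration $0 = E_0 \subset E_1 \subset \cdots \subset E_\ell = E$ by coherent subsheaves whose successive quotients $Q_i = E_i/E_{i-1}$ are stable and torsion--free with $\mu(Q_i) = \mu(E)$ for all $i$. On the open set where each $Q_i$ is locally free, \cite{UY86} supplies a genuine Hermitian--Einstein metric with factor $\lambda$, so the graded object $\mathrm{gr}(E) = \bigoplus_i Q_i$ carries the direct--sum metric, which solves the Hermitian--Einstein equation exactly.

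It then remains to transfer this exact solution to approximate solutions on $E$. A $C^\infty$ splitting of the filtration identifies $E$ smoothly with $\mathrm{gr}(E)$, and under this identification the holomorphic structure of $E$ differs from that of $\mathrm{gr}(E)$ only through the second fundamental forms $\beta_i$ of the successive extensions. I would rescale the metric on the $i$--th graded piece by a constant factor $t^{a_i}$, with exponents $a_1 > a_2 > \cdots > a_\ell$, and let $t \to 0$. Because the filtration is one--directional, this rescaling multiplies every off--diagonal block of the curvature, together with the $\beta_i \wedge \beta_i^{*}$ corrections on the diagonal, by strictly positive powers of $t$, while the leading diagonal blocks remain $\lambda\,\mathrm{Id}$. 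Hence the Hermitian--Einstein tensor of the rescaled metric on $E$ converges to $\lambda\,\mathrm{Id}_E$ uniformly on $X$, which is precisely the assertion that $E$ admits approximate Hermitian--Einstein metrics.

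The hard part is to make this last step rigorous on a general compact K\"ahler manifold. Two points require care. First, the quotients $Q_i$ are in general only torsion--free, so one needs the extension of \cite{UY86} to stable torsion--free sheaves and must verify that the curvature contributions from the codimension--two locus where $Q_i$ fails to be locally free are negligible in the relevant norm. Second, the second fundamental forms $\beta_i$ must be shown to be uniformly bounded, so that the off--diagonal blocks genuinely decay in the supremum norm under the rescaling. An alternative, more analytic route avoids the explicit splitting: run the Donaldson heat flow and use semistability to prove that the supremum norm of its Hermitian--Einstein tensor has infimum zero over time. The obstacle there is the long--time behaviour of the flow and the identification of its limit with the graded object of the filtration, which is exactly where semistability must be fed in.
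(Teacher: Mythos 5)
First, a point of context: the paper does not prove this statement at all --- it is imported verbatim from \cite{Ja11}, and the only hint the paper gives of the underlying argument is the later remark that, for a semistable bundle with the determinant normalized, approximate Hermitian--Einstein metrics ``are realized along the flow for sufficiently large time.'' So the route actually taken in the cited source is the heat-flow argument you relegate to a closing aside, not the filtration-and-rescaling argument you develop. Your reverse implication is correct and standard: passing to the saturation, discarding the second-fundamental-form term with its favourable sign, and using the normalization of $\lambda$ to identify the main term of the Chern--Weil integral with $\mu(E)\rank\F$ up to an error of order $\eps$ is exactly the argument of \cite{Ko87}.

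The forward implication is where the genuine gap lies, and you have located it without closing it. On a general compact K\"ahler manifold the Jordan--H\"older quotients $Q_i$ are only stable \emph{torsion-free sheaves}, so the input is not \cite{UY86} but the Bando--Siu theory of admissible Hermitian--Einstein metrics; these are defined only off a codimension-two analytic set, have curvature controlled merely in $L^2$, and degenerate as one approaches the singular locus. Consequently there is no a priori bound on the second fundamental forms $\beta_i$ of a smooth splitting measured against such metrics, and the rescaling by $t^{a_i}$ does not manifestly produce a \emph{smooth} metric on all of $E$ with $\sup_X\bigl|\sqrt{-1}\Lambda_\omega F(h)-\lambda\,\mathrm{Id}_E\bigr|_h$ small --- the supremum norm is exactly the norm in which the degeneration is fatal. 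This is precisely the obstruction that confines Kobayashi's argument to projective manifolds, where one can resolve the sheaves by blow-ups and perturb the polarization by exceptional classes; on a general K\"ahler manifold that device is unavailable in the form Kobayashi uses it.

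The cited source \cite{Ja11} circumvents all of this by taking your ``alternative, more analytic route'' as the main road: one runs the Donaldson heat flow from an arbitrary normalized initial metric, establishes long-time existence, and shows that the infimum over time (equivalently, over all metrics) of the norm of the Hermitian--Einstein tensor is computed by the Harder--Narasimhan type of $E$, which vanishes exactly when $E$ is semistable; no Jordan--H\"older filtration, no exact solutions on the graded pieces, and no rescaling are needed. Since your write-up explicitly defers both the torsion-free difficulty in the filtration approach and the long-time analysis in the flow approach, neither route is completed, and the forward direction remains unproved as written.
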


Now let $G$ be a connected reductive linear algebraic group 
defined over $\CC$, and let $E_G$ be a holomorphic principal 
$G$--bundle over $X$.

\begin{definition}
$E_G$ is called {\em semistable\/} if for every triple $(P \, , 
U \, , \sigma)$, where
\begin{itemize}
\item $P \,\subset \, G$ is a maximal proper parabolic subgroup,
\item $U \subset X$ is a dense open subset such that the complement $X \setminus U$ is a complex analytic subset of $X$ of codimension at least $2$, and
\item $\sigma: U \longrightarrow E_G/P$ is a holomorphic 
reduction of structure group, over $U$, of $E_G$ to the subgroup 
$P$, satisfying the condition that the pullback $\sigma^\ast 
T_{\rel}$, which is a holomorphic vector bundle over $U$, 
extends to $X$ as a coherent analytic sheaf (here $T_{\rel}$ is 
the relative tangent bundle over $E_G/P$ for the natural 
projection $E_G/P \longrightarrow X$),
\end{itemize}
the inequality
\[
  \deg(\sigma^\ast T_{\rel}) \, \geq \, 0
\]
holds. The degree of $\sigma^\ast T_{\rel}$ is
\[
\deg(\sigma^\ast T_{\rel}) \, := \, \int_X c_1(\iota_\ast \sigma^\ast T_{\rel}) \wedge \omega^{n-1} \, ,
\]
where $\iota: U \longrightarrow X$ is the inclusion map.
\end{definition}

We will now define approximate Hermitian--Einstein structures on $E_G$. Let
\begin{equation} \label{atiyah}
0\,\longrightarrow\, \ad(E_G)\,\longrightarrow\, \At(E_G)\,
\stackrel{q}{\longrightarrow}\, TX\,\longrightarrow\, 0
\end{equation}
be the Atiyah exact sequence for $E_G$ (see \cite{At57} for the 
construction of \eqref{atiyah}). Recall that a {\em 
complex connection\/} on $E_G$ is a $C^\infty$ splitting
$D\, :\, TX\,\longrightarrow\, \At(E_G)$ of this 
exact sequence, meaning $q\circ D\,=\, \text{Id}_{TX}$.
Note  that \eqref{atiyah} is a short exact sequence of sheaves of 
Lie algebras. The {\em curvature form\/} of a 
connection $D$,
$$
  F(D) \, \in \, H^0(X \, , \Lambda^{1,1} T^\ast X \tensor 
\ad(E_G)) \, ,
$$
measures the obstruction of the homomorphism $D$ to be Lie 
algebra structure preserving; see \cite{At57} for the details.

Fix a maximal compact subgroup
$$
K \, \subset\, G\, .
$$
A {\em Hermitian structure\/} on $E_G$ is a smooth reduction of 
structure group $E_K$ of $E_G$ to $K$. Given a Hermitian 
structure $E_K$ on $E_G$, there is a unique complex connection on 
$E_G$ which is induced by a connection on $E_K$. This connection on 
$E_G$ is called the {\em Chern connection\/} of the Hermitian 
structure $E_K$, and it will be denoted by $\nabla^{E_K}$.
The connection on $E_K$ inducing the Chern connection on
$E_G$ will also be called the Chern connection. Let
\begin{equation} \label{curvature}
  F(\nabla^{E_K}) \, \in \, H^0(X \, , \Lambda^{1,1} T^\ast X 
\tensor \ad(E_G))
\end{equation}
be the curvature of $\nabla^{E_K}$. Note that
$F(\nabla^{E_K})$ lies in the image of
$\Lambda^{2} (T^{\mathbb R} X)^\ast\tensor \ad(E_K)$, where
$T^{\mathbb R} X$ is the real tangent bundle.

Let $\mathfrak{g}$ be the Lie algebra of $G$.
Consider the adjoint representation
\begin{equation} \label{adjoint}
  \rho: \, G \,\longrightarrow\, \GL(\mathfrak{g}) \, .
\end{equation}
Fix a maximal compact subgroup $\widetilde K\,\subset\, 
\GL(\mathfrak{g})$ containing $\rho(K)$. Let 
$$
E_{\GL(\mathfrak{g})}\,=\, E\times^G 
\GL(\mathfrak{g})\,\longrightarrow\, X
$$
be the principal $\GL(\mathfrak{g})$--bundle obtained by 
extending the structure group of $E_G$ to $\GL(\mathfrak{g})$ 
using the homomorphism $\rho$ in \eqref{adjoint}. We note that
the vector bundle associated to the principal 
$\GL(\mathfrak{g})$--bundle $E_{\GL(\mathfrak{g})}$ for the 
standard action of $\GL(\mathfrak{g})$ on $\mathfrak{g}$ is 
identified with the adjoint bundle $\ad(E_G)$.

Given a Hermitian structure $E_K \subset E_G$, we obtain a reduction of structure group
\begin{equation} \label{induced}
E_K(\widetilde K) \, = \, E_K \times^K \widetilde K \, \subset 
\, E_{\GL(\mathfrak{g})}
\end{equation}
of $E_{\GL(\mathfrak{g})}$ to $\widetilde K$. This reduction 
corresponds to a Hermitian metric on the adjoint vector bundle 
$\ad(E_G)$.

Let $\mathfrak{z}$ be the center of the Lie algebra $\mathfrak{g}$. Since the adjoint action of $G$ on $\mathfrak{z}$ is trivial, an element $\lambda \in \mathfrak{z}$ defines a smooth section of $\ad(E_G)$, which will also be denoted by $\lambda$.

\begin{definition} \label{approximate}
A holomorphic principal $G$--bundle $E_G$ over $X$ is said to 
admit {\em approximate Hermitian--Einstein 
structures\/} if for every $\eps \,>\, 0$, there exists a 
Hermitian structure $E_K \,\subset\, E_G$ and an element 
$\lambda\, \in\, \mathfrak{z}$, such 
that
\[
\sup_X \big|\Lambda_\omega F(\nabla^{E_K}) - 
\lambda\big|_{h_{E_K}} < \, \eps \, ,
\]
where $F(\nabla^{E_K})$ is the curvature form of the Chern 
 connection of $E_K$ (see \eqref{curvature}), and $h_{E_K}$ is 
the Hermitian metric on $\ad(E_G)$ induced by $E_K$ (see 
\eqref{induced}).
\end{definition}

\section{Proof of Theorem \ref{main}}

We will first show that it is enough to prove the theorem under 
the assumption that $G$ is semisimple.

Let $Z_0(G)$ be the connected component of the center of $G$
which contains the identity element. The normal subgroup $[G \, , 
G] \,\subset\, G$ is semisimple because $G$ is reductive. We have 
a natural surjective homomorphism
\[
G \,\longrightarrow \, (G/Z_0(G)) \times (G/[G \, , G])
\]
whose kernel is a finite group contained in the center of $G$.
In particular, the induced homomorphism of Lie algebras is an
isomorphism.

Let $\rho\,:\, A \,\longrightarrow\, B$ be a homomorphism of Lie 
groups 
such that the induced homomorphism of Lie algebras
$$
d\rho\, :\, \text{Lie}(A)\, \longrightarrow\, \text{Lie}(B)
$$
is an isomorphism, and $\text{kernel}(\rho)$ is contained in the
center of $A$. Let $E_A$ be a principal $A$--bundle, and let $E_B 
\,:=\, E_A \times^\rho B$ be the principal $B$--bundle obtained 
by extending the structure group of $E_A$ to $B$ using $\rho$. 
The isomorphism of Lie algebras $d\rho$ produces 
an isomorphism
$$
\widetilde{\rho}\,:\, \text{ad}(E_A) 
\,\longrightarrow\,\text{ad}(E_B)
$$
between the adjoint bundles.
There is a natural bijective correspondence between the 
connections on $E_A$ and the connections on $E_B$. To construct
this bijection, first note that $\rho$ induces a map
$$
\widehat{\rho}\, :\, E_A\, \longrightarrow\, E_B
$$
that sends $z\, \in\, E_A$ to the element of $E_B$ given by
$(z\, , e)$ (recall that $E_B$ is a quotient of $E_A\times B$).
This map $\widehat{\rho}$ intertwines the actions of $A$, with
$A$ acting on $E_B$ through~$\rho$. Since $\text{kernel}(\rho)$ 
is a finite group contained in the center of $A$, any
$A$--invariant vector field on $E_A\vert_U$, where $U$ is some
open subset of the base manifold, produces a $B$--invariant 
vector field on $E_B\vert_U$. This way we get an isomorphism of
$\text{At}(E_A)$ with $\text{At}(E_B)$. This identification
of $\text{At}(E_A)$ with $\text{At}(E_B)$ produces a bijection
between the connections on $E_A$ and the connections on 
$E_B$. The curvature of a connection on $E_B$ is given by the 
curvature of the corresponding connection on $E_A$ using the 
isomorphism $\widetilde{\rho}$.

Therefore, to prove the theorem, it 
suffices to prove it for $G/Z_0(G)$ and $G/[G \, , G]$ 
separately. Since $G/[G \, , G]$ is a product of copies of 
$\CC^\ast$, in this case the theorem follows immediately from 
Theorem \ref{jacob}. Since $G/Z_0(G)$ is semisimple, it is enough 
to prove the theorem under the assumption that $G$ is semisimple.

Henceforth, we will assume that $G$ is semisimple. This implies 
that the center $\mathfrak z$ of its Lie algebra $\mathfrak{g}$ 
is trivial, and thus the constant $\lambda$ in Definition 
\ref{approximate} is zero. The Killing form on $\mathfrak{g}$,
being $G$--invariant, produces a holomorphic bilinear form
on the fibers of $\ad(E_G)$. Since the Killing form is
nondegenerate (as $G$ is semisimple), this bilinear form
on the fibers of $\ad(E_G)$ is nondegenerate. Hence we get
a trivialization
\begin{equation}\label{tr}
\det (\ad(E_G))\, :=\, \bigwedge\nolimits^{\rm top} \ad(E_G)\, 
\stackrel{\sim}{\longrightarrow}\, {\mathcal O}_X\, .
\end{equation}
Therefore, $\deg(\ad(E_G))\,=\, \deg(\det (\ad(E_G)))
\,=\, 0$, or equivalently
\begin{equation}\label{m}
\mu(\ad(E_G))\,=\, 0\, .
\end{equation}

First assume that the principal bundle $E_G$ admits approximate 
Hermitian--Einstein structures. Given $\eps \,>\, 0$, we thus 
obtain a Hermitian structure $E_K \,\subset\, E_G$ satisfying the 
condition that
\begin{equation} \label{epsilon}
  \sup_X \big|\Lambda_\omega F(\nabla^{E_K})\big|_{h_{E_K}} < \, \eps \, .
\end{equation}
The Hermitian structure $E_K$ on $E_G$ induces a Hermitian 
metric $h_{E_K}$ on $\ad(E_G)$ (see \eqref{induced}). The Chern 
connection on $\ad(E_G)$ for $h_{E_K}$ coincides with the 
connection $\nabla^{\ad}$ on $\ad(E_G)$ induced by 
$\nabla^{E_K}$. The curvature forms of $\nabla^{E_K}$ and $\nabla^{\ad}$ are related by
\begin{equation} \label{curvatures}
  F(\nabla^{\ad}) \, = \, \ad(F(\nabla^{E_K})) \, ,
\end{equation}
where
\begin{equation}\label{adh}
\ad\,:\, \ad(E_G) \,\longrightarrow\, \End(\ad(E_G))\,=\,
\ad(E_G)\otimes \ad(E_G)^*
\end{equation}
is the homomorphism of vector bundles induced by the homomorphism
of Lie algebras ${\mathfrak g}\, \longrightarrow\,{\mathfrak 
g}\otimes {\mathfrak 
g}^*$ given by the adjoint action of $G$ on $\mathfrak{g}$.

Since the Hermitian metric $h_{E_K}$ on $\ad(E_G)$ is induced by 
a Hermitian structure on $E_G$, there is a real number $c_0 > 0$
such that $\frac{1}{c_0} \cdot \ad$ is an isometry, where $\ad$
is the homomorphism in \eqref{adh}. Now from
\eqref{curvatures} it follows that
\begin{align*}
  \big|\sqrt{-1} \Lambda_\omega F(\nabla^{\ad})\big|_{h_{E_K}}^2
  &= \, \tr(\Lambda_\omega F(\nabla^{\ad}) \circ (\Lambda_\omega F(\nabla^{\ad}))^\ast) \\
  &= \, \tr(\ad(\Lambda_\omega F(\nabla^{E_K})) \circ (\ad(\Lambda_\omega F(\nabla^{E_K})))^\ast) \\
  &= \, c^2_0\cdot h_{E_K}(\Lambda_\omega F(\nabla^{E_K}) \, , 
\Lambda_\omega F(\nabla^{E_K})) \\
  &= \, c^2_0\cdot \big|\Lambda_\omega 
F(\nabla^{E_K})\big|_{h_{E_K}}^2 \, ,
\end{align*}
where ``$\ast$'' denotes the adjoint with respect to $h_{E_K}$. From this and 
\eqref{epsilon} we conclude that
\[
  \sup_X \big|\sqrt{-1} \Lambda_\omega 
F(\nabla^{\ad})\big|_{h_{E_K}} < \, c^2_0\cdot \eps \, .
\]
Therefore, $\ad(E_G)$ admits approximate Hermitian--Einstein 
metrics, and is semistable by Theorem ~\ref{jacob}.
A holomorphic principal $G$--bundle $F_G$ over $X$ 
is semistable if and only if its adjoint vector bundle $\ad(F_G)$ 
is semistable \cite[Proposition 2.10]{AB01}. Therefore,
a principal $G$--bundle admitting approximate
Hermitian--Einstein structures is semistable.

For the converse direction, assume that $E_G$ is semistable. As 
we have stated, this is equivalent to the vector bundle 
$\ad(E_G)$ being semistable. Let ${\mathcal H}(\ad(E_G))$ be the 
space of all $C^\infty$ Hermitian metrics $h$ on $\ad(E_G)$ 
satisfying the following condition: the isomorphism in
\eqref{tr} takes the Hermitian metric on $\det (\ad(E_G))$ 
induced by $h$ to the constant Hermitian metric on ${\mathcal 
O}_X$ given by the absolute value. For any initial $h\, \in\, 
{\mathcal H}(\ad(E_G))$, we can evolve the metric by the 
following parabolic equation, which we call the Donaldson heat 
flow:
$$
h^{-1} \partial_t h \,=\, - (\sqrt{-1} \Lambda_\omega F(h)
-\lambda\cdot \text{Id}_{\ad(E_G)})\, .
$$
Here $F(h)$ is the curvature of the Chern connection on
$\ad(E_G)$ for $h$, and
\[
  \lambda \, = \, \frac{2\pi \cdot \mu(\ad(E_G))}{(n-1)! \cdot \vol(X)} \, .
\]
Since $\mu(\ad(E_G))\,=\,0$ (see \eqref{m}), for us the Donaldson heat 
flow is given by the simpler expression
\begin{equation}\label{df}
h^{-1} \partial_t h \,=\, - \sqrt{-1} \Lambda_\omega F(h)\, .
\end{equation}

As shown in \eqref{induced}, a Hermitian structure on $E_G$ 
produces a Hermitian metric on $\ad(E_G)$. Such a Hermitian 
metric on $\ad(E_G)$ satisfies the condition that the isomorphism 
in \eqref{tr} takes the induced Hermitian metric on $\det 
(\ad(E_G))$ to the constant Hermitian metric on ${\mathcal O}_X$ 
given by the absolute value. In other words, it lies in
${\mathcal H}(\ad(E_G))$. Let
$$
{\mathcal H}(E_G)\, \subset\, {\mathcal H}(\ad(E_G))
$$
be the subspace corresponding to the Hermitian structures on 
$E_G$.

\begin{lemma}\label{lem1}
The Donaldson heat flow on ${\mathcal H}(\ad(E_G))$ preserves
${\mathcal H}(E_G)$.
\end{lemma}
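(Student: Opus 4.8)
The plan is to realize the restriction of the Donaldson heat flow \eqref{df} to ${\mathcal H}(E_G)$ as a flow on the reductions of $E_G$ to $K$, and then to conclude by uniqueness of solutions. Fix an initial metric $h_0\,=\,h_{E_K^0}\,\in\,{\mathcal H}(E_G)$ coming from a reduction $E_K^0\,\subset\,E_G$. Since $G$ is semisimple, the Cartan decomposition of $\mathfrak g$ associated with $K$ reads $\mathfrak g\,=\,\mathfrak k\oplus\mathfrak p$ with $\mathfrak p\,=\,\sqrt{-1}\,\mathfrak k$, and the polar decomposition identifies $G/K$ with $\exp(\mathfrak p)$. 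Consequently, every reduction of $E_G$ to $K$ near $E_K^0$ is of the form $\exp(w)\cdot E_K^0$ for a unique section $w$ of the sub-bundle $\mathfrak p(E_K^0)\,:=\,E_K^0\times^K\mathfrak p\,\subset\,\ad(E_G)$, so that ${\mathcal H}(E_G)$ is a smooth submanifold of ${\mathcal H}(\ad(E_G))$.

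First I would identify the tangent space of ${\mathcal H}(E_G)$ at a point $h_{E_K}$. Using the convention that identifies a tangent vector with the self-adjoint endomorphism $h_{E_K}^{-1}\partial_t h_{E_K}$, the infinitesimal change of the induced metric \eqref{induced} under an infinitesimal change $w\,\in\,\Gamma(\mathfrak p(E_K))$ of the reduction is read off from $\rho(\exp w)\,=\,\exp(\ad(w))$: because $w$ lies in $\mathfrak p$, the endomorphism $\ad(w)\,=\,d\rho(w)$ is self-adjoint with respect to $h_{E_K}$, and the induced metric varies with velocity equal to a fixed positive multiple of $\ad(w)$, where $\ad$ is the map \eqref{adh}. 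Hence, under this identification,
\[
T_{h_{E_K}}{\mathcal H}(E_G)\,=\,\big\{\ad(w)\,:\,w\in\Gamma(\mathfrak p(E_K))\big\}\,=\,\ad\big(\Gamma(\sqrt{-1}\,\ad(E_K))\big)\,.
\]

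The key observation is that the velocity of the flow \eqref{df} lies in this tangent space whenever $h\,=\,h_{E_K}\,\in\,{\mathcal H}(E_G)$. Indeed, the curvature of the Chern connection of a $K$--reduction takes values in $\ad(E_K)$, so $\sqrt{-1}\,\Lambda_\omega F(\nabla^{E_K})$ is a section of $\sqrt{-1}\,\ad(E_K)\,=\,\mathfrak p(E_K)$; and by \eqref{curvatures} the induced curvature on $\ad(E_G)$ satisfies $\sqrt{-1}\,\Lambda_\omega F(h_{E_K})\,=\,\ad(\sqrt{-1}\,\Lambda_\omega F(\nabla^{E_K}))$. Thus the right-hand side of \eqref{df} equals $\ad(v)$ with $v\,=\,-\sqrt{-1}\,\Lambda_\omega F(\nabla^{E_K})\,\in\,\Gamma(\mathfrak p(E_K))$, which is tangent to ${\mathcal H}(E_G)$. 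This suggests the \emph{reduction flow}: evolve the reduction $E_K(t)$ with velocity the appropriate positive multiple of $-\sqrt{-1}\,\Lambda_\omega F(\nabla^{E_K(t)})$, a quasi-linear parabolic equation for a section of the $\mathfrak p$--bundle that admits a unique short-time solution with $E_K(0)\,=\,E_K^0$. By the velocity computation above, the induced metrics $h_{E_K(t)}$ solve \eqref{df} with the same initial value $h_0$, so uniqueness for the parabolic equation \eqref{df} forces $h(t)\,=\,h_{E_K(t)}\,\in\,{\mathcal H}(E_G)$; a standard open--closed argument then extends this over the whole interval on which the Donaldson heat flow exists.

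The main obstacle is the velocity-matching step, namely verifying rigorously that the time derivative of the induced metric on $\ad(E_G)$ is $\ad$ applied to the velocity of the reduction, with the correct reality and self-adjointness. This rests on the choice $\rho(K)\,\subset\,\widetilde K$ made before \eqref{induced}, which guarantees that $d\rho(\mathfrak k)$ consists of skew-adjoint and $d\rho(\mathfrak p)$ of self-adjoint endomorphisms of $(\ad(E_G)\, ,h_{E_K})$; once this is in place, the remaining ingredients — short-time existence and uniqueness for the reduction flow and for \eqref{df} — are standard parabolic theory.
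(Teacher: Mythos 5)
Your key observation is exactly the paper's: at a metric $h_{E_K}$ induced by a reduction $E_K$, the curvature of the Chern connection is a real two--form with values in $\ad(E_K)$, so $\sqrt{-1}\,\Lambda_\omega F(h_{E_K})$ is $\ad$ applied to a section of $\sqrt{-1}\,\ad(E_K)$ and the right-hand side of \eqref{df} is tangent to ${\mathcal H}(E_G)$. Where you genuinely differ is in how you pass from tangency of the velocity field to invariance of the submanifold: the paper simply asserts this implication in its last sentence, whereas you make it rigorous by parametrizing nearby reductions via the Cartan decomposition, setting up a parallel parabolic flow on the space of $K$--reductions whose induced metrics solve \eqref{df}, and invoking uniqueness for \eqref{df} together with an open--closed argument. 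This buys actual rigor --- in an infinite-dimensional setting, pointwise tangency of a velocity field to a submanifold does not by itself imply the flow stays on it --- at the cost of two inputs you treat as standard but should at least name: short-time existence and uniqueness for the reduction flow (it is parabolic for the same reason \eqref{df} is, since $\frac{1}{c_0}\ad$ from \eqref{adh} is an injective isometry), and closedness of ${\mathcal H}(E_G)$ inside ${\mathcal H}(\ad(E_G))$ for the closedness half of your open--closed argument, which holds because for $G$ semisimple the map $G/K \longrightarrow \GL(\mathfrak g)/\widetilde K$ induced by the adjoint representation is a closed embedding. Both proofs rest on the same geometric fact; yours is the fully detailed version of the paper's one-line deduction, and it is correct.
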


\begin{proof}
Let $E_K\, \subset\, E_G$ be a $C^\infty$ reduction of structure 
group to $K$. The element of ${\mathcal H}(\ad(E_G))$ given by
the Hermitian structure $E_K$ will be denoted by $h$.
The Chern connection $\nabla(h)$ on $\ad(E_G)$ for $h$ is given 
by the Chern connection $\nabla^{E_K}$ on $E_K$. In 
particular, the curvature of $\nabla(h)$ coincides with that of
$\nabla^{E_K}$. Therefore, the curvature $F(h)$ in \eqref{df}, 
which is a priori a real two--form 
with values in $\End(\ad(E_G))$, is actually a real two--form 
with values in ${\rm ad}(E_K)$ (the adjoint bundle of $E_K$).
Consequently, $\Lambda_\omega F(h)$ is a $C^\infty$ section of
${\rm ad}(E_K)$ (the operator $\Lambda_\omega$ takes real 
two--forms to real valued functions). This implies 
that the Donaldson heat flow on 
${\mathcal H}(\ad(E_G))$ preserves ${\mathcal H}(E_G)$.
\end{proof}

Fix a Hermitian metric $h_0 \in {\mathcal H}(\ad(E_G))$, and consider the 
Donaldson heat flow with initial metric $h_0$. The space ${\mathcal H}(\ad(E_G))$ was defined so that $h_0$ satisfies the normalization
\[
  c_1(\ad(E_G) \, , h_0) \, = \, \frac{\sqrt{-1}}{2 \pi} \, \tr(F(h_0)) \, = \, 0 \, .
\]
This guarantees that
\[
  \det(h_0^{-1} h) \, = \, 1
\]
along the flow. From this and the semistability of $\ad(E_G)$,
it follows that approximate Hermitian--Einstein 
metrics on $\ad(E_G)$ are realized along the flow for 
sufficiently large time (see the proof of Theorem~\ref{jacob} 
in \cite{Ja11} for details). Consequently, taking $h_0$ to be an 
element of ${\mathcal 
H}(E_G)$, from Lemma \ref{lem1} we conclude that $E_G$ admits 
approximate Hermitian--Einstein structures. This completes the
proof of Theorem \ref{main}.

\end{document}